\documentclass[12pt]{amsart}

\usepackage[utf8]{inputenc}
\usepackage[francais]{babel}
\usepackage{hyperref}
\usepackage{comment}

\usepackage{amssymb,amsthm,amsmath,mathrsfs,amssymb}
\usepackage{euler}

\DeclareOption{proof}{%
  \setlength{\marginparwidth}{0.6in}%
  \def\?[#1]{\textbf{[#1]}\marginpar{\Large{\textbf{??}}}}%
}
\ProcessOptions\relax


\renewcommand{\leq}{\leqslant}
\renewcommand{\geq}{\geqslant}

\setlength{\textheight}{8.50in} \setlength{\oddsidemargin}{0.00in}
\setlength{\evensidemargin}{0.00in} \setlength{\textwidth}{6.08in}
\setlength{\topmargin}{0.00in} \setlength{\headheight}{0.18in}
\setlength{\marginparwidth}{1.0in}
\setlength{\abovedisplayskip}{0.2in}
\setlength{\belowdisplayskip}{0.2in}
\setlength{\parskip}{0.05in}

\makeatletter
\def\paragraph{\@startsection{paragraph}{4}%
  \z@\z@{-\fontdimen2\font}%
  {\normalfont\bfseries}}
\makeatother

\newcommand{\GL}{GL}
\newcommand{\Z}{\mathbb{Z}}

\newcommand{\R}{\mathbb{R}}
\newcommand{\C}{\mathbb{C}}
\newcommand{\modzero}{\,\left(\mathrm{mod}\,0\right)}

\newcommand*\colvec[3][]{
    \begin{pmatrix}\ifx\relax#1\relax\else#1\\\fi#2\\#3\end{pmatrix}
}

\newcommand{\norm}[1]{\left\lVert#1\right\rVert}
\newtheorem{thm}{Théorème}
\newtheorem{lem}{Lemme}

\theoremstyle{definition}

\newtheorem{rem}{Remarque}

\newtheorem{fait}{Fait}
\title{Inexistence de pavages mesurables invariants par un réseau dans un espace homogène d'un groupe de Lie simple}
\author{Félix Lequen}
\address{Laboratoire AGM -- CY Cergy Paris Université}
\email{felix.lequen@cyu.fr}
\usepackage{fullpage}
\begin{document}
	\maketitle

\section{Introduction}

\begin{abstract}
	On démontre qu'un espace homogène d'un groupe de Lie presque simple n'admet pas de pavage mesurable invariant par un réseau du groupe de Lie. Ceci constitue un raffinement du théorème d'ergodicité de Howe-Moore. 
\end{abstract}

Soit $\Gamma$ un groupe dénombrable agissant sur un espace mesuré $\sigma$-fini $(X, \mathcal{B}, \mu)$, tel que $\Gamma$ \emph{quasi-préserve} la mesure $\mu$ : pour tout $\gamma \in \Gamma$ et tout $A \in \mathcal{B}$, $\mu(\gamma A) = 0$ si et seulement si $\mu(A) = 0$. On dit que l'action de $\Gamma$ est \emph{ergodique} si tout ensemble $A \in \mathcal{B}$ qui est invariant par $\Gamma$ vérifie soit $\mu(A) = 0$, soit $\mu(X\setminus A) = 0$. Toute action ergodique d'un groupe sur un espace sans atome est \emph{conservative} : pour tout ensemble $A \in \mathcal{B}$ tel que $\mu(A) > 0$, il existe $\gamma \in \Gamma \setminus \{e\}$ tel que $\mu(A \cap \gamma A) > 0$. Ici et dans la suite, on a noté $e$ l'élément neutre d'un groupe

Le cas qui nous intéressera ici est celui des actions de réseaux sur les espaces homogènes de groupe de Lie presque simples, c'est-à-dire d'algèbre de Lie simple. Le théorème de Howe-Moore ci-dessous affirme alors l'ergodicité de l'action, dès lors que le stabilisateur est non-compact. 
\begin{thm}[Howe-Moore \cite{zimmer2013ergodic}]
	Soit $G$ un groupe de Lie connexe presque simple à centre fini. Soit $\mu$ une mesure de Haar de $G$ et $\Gamma$ un réseau de $G$.
	
	Soit $H$ un sous-groupe d'adhérence non compacte de $G$.
	
	Alors si $A \subset G$ est $H$-invariant à droite et $\Gamma$-invariant à gauche, on a $\mu(A) = 0$ ou $\mu(G\setminus A) = 0$.
\end{thm}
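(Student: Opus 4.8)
The plan is to reduce the statement to the decay of matrix coefficients of the regular representation and then invoke that decay. Since $G$ is semisimple it is unimodular, so the Haar measure $\mu$ is bi-invariant and descends to a finite $G$-invariant measure $\bar\mu$ on the quotient $X = \Gamma\backslash G$ (finite precisely because $\Gamma$ is a lattice). A set $A$ that is left $\Gamma$-invariant and right $H$-invariant descends to an $H$-invariant measurable subset $\bar A \subset X$ for the right action, and the conclusion is equivalent to $\bar\mu(\bar A) = 0$ or $\bar\mu(X\setminus\bar A) = 0$. If $G$ is compact, then every $H \subset G$ has compact closure, so the hypothesis is vacuous; I would therefore assume $G$ non-compact. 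Consider the right regular representation $\pi$ of $G$ on $L^2(X,\bar\mu)$ and decompose $f = \mathbf{1}_{\bar A} = c + f_0$, where $c$ is the orthogonal projection onto the constants and $f_0$ lies in the $G$-invariant subspace $L^2_0(X)$ of functions of zero average. The key structural fact is that $L^2_0$ carries no nonzero $G$-invariant vector: a $G$-invariant function on $\Gamma\backslash G$ is right-invariant, hence constant, since $G$ acts transitively.

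Granting the decay of matrix coefficients, the argument concludes at once. Because $\bar A$ is right $H$-invariant, $\pi(h)f = f$ for every $h \in H$, whence $\pi(h)f_0 = f_0$. As $H$ has non-compact closure, I would pick a sequence $h_n \in H$ leaving every compact subset of $G$. Then $\langle \pi(h_n) f_0, f_0\rangle = \norm{f_0}^2$ for all $n$, while the Howe–Moore decay theorem applied to $L^2_0$ (which has no invariant vectors) forces $\langle \pi(h_n) f_0, f_0\rangle \to 0$. Hence $f_0 = 0$, so $f$ is $\bar\mu$-essentially constant and $\bar A$ is null or co-null. This reduction is elementary once decay is available.

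It remains to sketch the decay theorem itself: for a unitary representation $\pi$ of $G$ with no invariant vectors, $\langle \pi(g)v,w\rangle \to 0$ as $g\to\infty$. Using the Cartan decomposition $G = K\,\overline{A^+}\,K$ with $K$ maximal compact and $\overline{A^+}$ a closed Weyl chamber, and the compactness of $K$ (extracting convergent subsequences of the $K$-parts and absorbing their limits into $v$ and $w$), it suffices to prove $\langle \pi(a_n)v,w\rangle \to 0$ for any $a_n\to\infty$ in $\overline{A^+}$. Suppose not; then along a subsequence $\pi(a_n)v$ converges weakly to some $\xi$ with $\langle\xi,w\rangle\neq 0$, so $\xi\neq 0$. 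For $u$ in the horospherical subgroup contracted by conjugation, $a_n^{-1} u a_n \to e$, so strong continuity gives $\pi(a_n^{-1}ua_n)v \to v$, and since $\pi(a_n)$ is unitary, $\pi(u)\pi(a_n)v - \pi(a_n)v \to 0$ strongly; passing to the weak limit yields $\pi(u)\xi = \xi$. Thus $\xi$ is invariant under the whole expanding horospherical subgroup (the Mautner phenomenon).

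The main obstacle is the final, purely representation-theoretic step: upgrading invariance under one (maximal) unipotent subgroup to full $G$-invariance of $\xi$, which would contradict the absence of invariant vectors. This is exactly where the simplicity of the Lie algebra and the finiteness of the center are needed — one combines $\xi$ with the opposite horospherical invariance obtained symmetrically from a weak limit of $\pi(a_n^{-1})w$, and argues that in an almost simple group the resulting stabilizer, containing both horospherical subgroups, generates a non-central normal subgroup, hence all of $G$. I expect this structural step to be the genuinely delicate and deep part of Howe–Moore; since the statement is quoted as a known theorem, I would ultimately invoke it as in \cite{zimmer2013ergodic} rather than redevelop the structure theory.
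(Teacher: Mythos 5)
The paper gives no proof of this statement at all: Théorème 1 is quoted as a known result, with the reference \cite{zimmer2013ergodic}, and is then used as a black box in the proof of the main theorem (notamment dans le fait sur le normalisateur). So there is no internal argument to compare yours against; what you propose is essentially the proof from the cited reference. Your reduction half is complete and correct: unimodularity of $G$, finiteness of the induced measure on $\Gamma\backslash G$, the decomposition $\mathbf{1}_{\bar A} = c + f_0$ with $f_0 \in L^2_0$, and the observation that $\langle \pi(h_n)f_0, f_0\rangle = \norm{f_0}^2$ is incompatible with decay of matrix coefficients unless $f_0 = 0$. This is exactly the standard passage from vanishing of matrix coefficients to ergodicity, and it is the right way to organize the statement.

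Two caveats on your sketch of the decay theorem itself. First, in higher rank, a sequence $a_n \to \infty$ in $\overline{A^+}$ need not contract a full horospherical subgroup: only the root subgroups $U_\alpha$ with $\alpha(\log a_n) \to +\infty$ (after extraction) are contracted, so the weak limit $\xi$ is a priori invariant only under a possibly proper horospherical subgroup; this is one of the places where real work remains. Second, and more seriously, your description of the final step would fail if carried out literally: the weak limit of $\pi(a_n^{-1})w$ is a \emph{different} vector from $\xi$, and knowing that two different vectors are invariant under two opposite horospherical subgroups gives nothing directly --- one cannot speak of a single stabilizer containing both subgroups. The genuine argument upgrades the invariance of the single vector $\xi$, for instance by showing that the continuous bi-invariant matrix coefficient $g \mapsto \langle \pi(g)\xi, \xi\rangle$ is constant (via Bruhat-type density of the relevant double cosets and the equality case of Cauchy--Schwarz), or by an iterated Mautner argument along $\mathfrak{sl}_2$-triples attached to the roots. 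Since you explicitly defer this step to the cited reference --- which is precisely what the paper does for the entire theorem --- your proposal is acceptable as citation-plus-reduction, but the bridge you sketch for the last step is not the one that holds the weight.
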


Dans cette note, on s'intéresse à une condition plus forte que l'ergodicité, à savoir l'inexistence de pavages mesurables non triviaux invariants par une action de groupe, c'est-à-dire d'éléments $P \in \mathscr{B}$ (appelé \emph{pavés}) tels que pour tout $\gamma \in \Gamma$, $\gamma P \cap P$ est de mesure nulle, ou $\gamma P \Delta P$ est de mesure nulle. Il est possible qu'une action conservative (et même ergodique) d'un groupe dénombrable admette un pavage mesurable non trivial, c'est-à-dire de pavé de mesure non nulle ou pleine. Par exemple, considérons $\Gamma$ un groupe dénombrable agissant de façon ergodique sur $(X, \mathscr{B}, \mu)$ et $G$ un groupe discret dénombrable quelconque, muni de la mesure de comptage, et l'action de $\Gamma \times G$ sur l'espace $X \times G$ définie par
\[(\gamma, g) \cdot (x, h) := (\gamma x, gh)\]
Cette action est ergodique, et pourtant l'ensemble $X \times \{e\}$ définit un pavage mesurable non-trivial de $X \times G$.

Une condition qui entra\^ine l'inexistence de pavages mesurables non triviaux est celle de \emph{mélange faible}, ou \emph{double ergodicité}, c'est-à-dire lorsque l'action de $\Gamma$ sur $(X, \mathscr{B}, \mu)$ est telle que l'action diagonale de $\Gamma$ sur $(X \times X, \mathcal{B} \otimes \mathcal{B}, \mu \otimes \mu)$ est ergodique. Dans ce cas, si $P$ est un pavé définissant un pavage mesurabl de $X$, alors l'ensemble $A \in \mathcal{B} \otimes \mathcal{B}$ des éléments appartenant à des pavés différents est invariant par l'action diagonale de $\Gamma$, ce qui entra\^ine que $P$ est de mesure nulle ou pleine. 

Cette condition est par exemple réalisée pour l'action par homographie de $SL_2(\Z)$ (ou d'un autre réseau) sur la droite projective réelle $\mathbb{P}^1(\R)$. L'action s'identifie à l'action par translation du réseau $SL_2(\Z)$ sur $SL_2(\R)/B$ où $B$ est le groupe des matrices triangulaires supérieures de $SL_2(\R)$, et l'action diagonale à l'action par translation sur $SL_2(\R)/A$, où $A$ est le groupe des matrices diagonales de $SL_2(\R)$. Ces deux actions sont ergodiques par le théorème de Howe-Moore — cet exemple se généralise en fait à l'action d'un réseau sur une variété de drapeaux d'un groupe de Lie simple. L'action de $SL_2(\Z)$ sur $\left(\mathbb{P}^1(\R) \times \mathbb{P}^1(\R) \setminus (\text{diagonale})\right) \simeq SL_2(\R)/A$ n'est en revanche pas elle-m\^eme faiblement mélangeante, en raison de l'existence du birapport qui définit une fonction invariante. Comme nous allons le voir, un argument semblable à celui de la preuve du théorème de Howe-Moore permet de montrer qu'il n'y a pas pour autant de pavage mesurable non trivial invariant par un réseau :

\begin{thm}
Soit $G$ un groupe de Lie connexe presque simple à centre fini. Soit $\mu$ une mesure de Haar de $G$ et $\Gamma$ un réseau de $G$.

Soit $H$ un sous-groupe d'adhérence non compacte de $G$.

Soit $P$ une partie mesurable de $G$, $H$-invariante à droite et telle que pour tous $\gamma_1, \gamma_2 \in \Gamma$, ou bien $\gamma_1P = \gamma_2P \modzero$, ou bien $\gamma_1P \cap \gamma_2P = \varnothing \modzero$.
Alors ou bien $P$ est de mesure nulle, ou bien $P$ est de mesure pleine. 
\end{thm}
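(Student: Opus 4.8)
The plan is to push the tile $P$ down to the quotient by its own stabiliser and to apply the mixing form of Howe--Moore there. Set $\Gamma_P := \{\gamma \in \Gamma : \gamma P = P \modzero\}$; the tiling hypothesis guarantees that this is a subgroup of $\Gamma$ and that the distinct translates $\gamma P$ are indexed by $\Gamma/\Gamma_P$, any two of them being either equal or disjoint $\modzero$. Since $P$ is left $\Gamma_P$-invariant it descends to a measurable set $\dot P \subseteq \Gamma_P\backslash G$, and since $P$ is right $H$-invariant so is $\dot P$. I may assume $\mu(P)>0$, and the goal is then to deduce that $P$ is of mesure pleine.

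The key point is that $\dot P$ has finite measure, whatever the index of $\Gamma_P$ in $\Gamma$. Consider the covering $\pi : \Gamma_P\backslash G \to \Gamma\backslash G$, whose fibre over $\Gamma g$ is the discrete set $\Gamma_P\backslash\Gamma$ via $\Gamma_P\gamma \mapsto \Gamma_P\gamma g$. I claim that $\dot P$ meets almost every fibre at most once: if $\gamma g$ and $\gamma' g$ both lie in $P$ with $\Gamma_P\gamma \ne \Gamma_P\gamma'$, then $\gamma^{-1}P$ and $\gamma'^{-1}P$ are two distinct, hence disjoint $\modzero$, tiles both containing $g$, so such $g$ form a null set. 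A Fubini argument over $\Gamma\backslash G$ with counting measure on the fibres then yields $\mu_{\Gamma_P\backslash G}(\dot P) \le \mu(\Gamma\backslash G) < \infty$, so that $\mathbf 1_{\dot P} \in L^2(\Gamma_P\backslash G)$. This is exactly the step where the tiling hypothesis is spent, and it is what makes a Hilbert-space argument available even when $\Gamma_P$ fails to be a réseau.

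Now I run the argument behind Théorème 1. As $G$ is unimodulaire and $\Gamma_P$ discret, $\Gamma_P\backslash G$ carries a $G$-invariant measure and $G$ acts unitarily by right translation; write $\rho$ for this représentation. The vector $\mathbf 1_{\dot P}$ is fixed by $\rho(h)$ for every $h \in H$. Decomposing $L^2(\Gamma_P\backslash G) = \mathcal H^G \oplus (\mathcal H^G)^\perp$ into the $G$-invariant vectors and their orthogonal, the Howe--Moore decay of matrix coefficients on $(\mathcal H^G)^\perp$ forces the component $v'$ of $\mathbf 1_{\dot P}$ there to satisfy $\langle \rho(h)v',v'\rangle = \|v'\|^2$ for all $h\in H$ while $\langle\rho(h_n)v',v'\rangle \to 0$ along any sequence $h_n\in H$ leaving every compact set of $G$ (such a sequence exists since $\overline H$ is non compacte); hence $v'=0$ and $\mathbf 1_{\dot P}$ is $G$-invariant. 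Right translation being transitive on $\Gamma_P\backslash G$, a $G$-invariant function is constant, so the $\{0,1\}$-valued $\dot P$ is null or conull.

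Unwinding, if $\dot P$ is null then so is $P$, against $\mu(P)>0$; hence $\dot P$, and therefore $P$, is conull. (When $\Gamma_P$ has infinite index, $\Gamma_P\backslash G$ has infinite measure, so $\mathcal H^G=\{0\}$ and the argument directly forces $\mathbf 1_{\dot P}=0$, i.e.\ this case simply cannot occur; when $\Gamma_P$ has finite index one recovers Théorème 1 applied to the sous-réseau $\Gamma_P$.) I expect the main obstacle to be precisely the finiteness of $\mu(\dot P)$: one is tempted to stay on $\Gamma\backslash G$, but a single tile does not descend there, and it is only after quotienting by the \emph{full} stabiliser $\Gamma_P$ that the tile becomes a genuine $L^2$, right $H$-invariant vector to which the Howe--Moore coefficient decay can be applied.
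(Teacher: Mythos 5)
Your proof is correct, and it takes a genuinely different route from the paper's. The paper never leaves $\Gamma\backslash G$: it uses only the ergodicity form of Howe--Moore (its Théorème 1), applied to the $\Gamma$-invariant sets $A(g)$ built from $\bigcup_{\gamma\in\Gamma}\gamma(P\cap Pg)$, and then must show by hand that the closed subgroup $S=\{g\in G\,:\,P=Pg\modzero\}$ equals $G$, which costs the continuity lemma, the Mautner phenomenon, Jacobson--Morozov, the real Jordan--Chevalley decomposition and the compactness lemma for groups of elliptic elements. You instead pass to the intermediate quotient $\Gamma_P\backslash G$, where the tile descends, and spend the tiling hypothesis exactly once: pairwise disjointness of distinct tiles means $\dot P$ meets almost every fibre of $\Gamma_P\backslash G\to\Gamma\backslash G$ at most once, so unfolding gives $\mu_{\Gamma_P\backslash G}(\dot P)\leq\nu(\Gamma\backslash G)<\infty$, and $1_{\dot P}$ becomes an $H$-fixed vector of $L^2(\Gamma_P\backslash G)$; the $C_0$ (matrix-coefficient decay) form of Howe--Moore then kills its component orthogonal to the $G$-invariant vectors, and transitivity of $G$ on $\Gamma_P\backslash G$ finishes. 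What each approach buys: yours is much shorter and bypasses all the Lie-theoretic structure theory, at the price of invoking the stronger (though entirely standard) mixing form of Howe--Moore rather than the ergodic statement the paper quotes; the paper's argument, conversely, needs only that ergodic statement as a black box and carries out the Mautner-type bootstrap explicitly. Three routine details you should still write out: (i) replace $P$ by $\bigcap_{\gamma\in\Gamma_P}\gamma P$ so that the left $\Gamma_P$-invariance is exact rather than $\modzero$ (a mod-$0$ invariant set does not literally define a subset of the quotient; note this intersection remains exactly right $H$-invariant since left and right translations commute); (ii) justify the Fubini step by the unfolding identity $\int_{\Gamma_P\backslash G}f=\int_{\Gamma\backslash G}\big(\sum_{\delta\in\Gamma_P\backslash\Gamma}f(\delta g)\big)\,d\nu$, for instance by observing that a union of coset representatives times a fundamental domain of $\Gamma$ is a fundamental domain of $\Gamma_P$; (iii) the sequence $h_n\in H$ leaving every compact set exists because $H$ itself cannot be contained in a compact set, its closure being non-compact. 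Your parenthetical dichotomy is also accurate: under $\mu(P)>0$ the infinite-index case is impossible, and the finite-index case reduces to Théorème 1 applied to the sublattice $\Gamma_P$.
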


Ici, on a noté $A = B \modzero$ lorsque $A = B$ à un ensemble de mesure nulle près ; on utilisera une notation analogue pour d'autres propriétés ensemblistes.

\paragraph{Remerciements. } Je remercie mon encadrant, Bertrand Deroin, pour m'avoir guidé et encouragé tout au long de la préparation de cette note, et Rémi Boutonnet pour sa relecture et son intérêt pour cette question.
\section{Preuve du théorème}
Soit $B$ le complémentaire de la réunion des $\gamma P$, pour $\gamma \in\Gamma$. C'est un ensemble $\Gamma$-invariant à gauche et $H$-invariant à droite, à des ensembles de mesure nulle près, donc il est de mesure nulle ou pleine. Si $B$ est de mesure pleine, alors $P$ est de mesure nulle. Dans la suite, on suppose donc que $B$ est de mesure nulle.

On note $\mu$ une mesure de Haar à gauche sur $G$. La mesure de Haar invariante à gauche est aussi invariante à droite, de sorte qu'on parle simplement de mesure de Haar sans plus de précision. On note $\nu$ la mesure $G$-invariante à droite sur $\Gamma\backslash G$ associée à $\mu$ et $\mathfrak{g}$ l'algèbre de Lie de $G$.
Pour tout $g \in G$, soit $\tilde{A}(g) := \bigcup_{\gamma \in \Gamma} \gamma(P\cap Pg)$ et $A(g)$ l'image de $\tilde{A}(g)$ dans $\Gamma\backslash G$. Enfin, soit 
\begin{align*}S &:= \{g \in G\,:\,\nu(A(g))= \nu(\Gamma\backslash G)\}\\
&= \{g \in G\,:\,\mu(G\setminus\tilde{A}(g)) = 0\}\\
&= \{g\in G\,:\,P = Pg \modzero\}\end{align*}

Justifions la dernière égalité. Une des deux inclusions est claire ; montrons l'autre. Soit $g \in G$ tel que $\tilde{A}(g)$ soit de mesure pleine. Alors l'ensemble $P\setminus \left(P \cap Pg\right)$ est de mesure nulle, puisqu'il rencontre $\tilde{A}(g)$ en un ensemble de mesure nulle : en effet, si $\gamma \in \Gamma$ est tel que $\gamma(P\cap Pg) \subset \gamma P$ rencontre $P\setminus \left(P \cap Pg\right) \subset P$ en un ensemble de mesure strictement positive, alors on a nécessairement $\gamma P = P \modzero$. Mais alors $\gamma(P \cap Pg) = P \cap Pg \modzero$. Donc $P \subset Pg \modzero$. En considérant $\tilde{A}(g^{-1}) = \tilde{A}(g)g^{-1}$, on a aussi $P \subset Pg^{-1} \modzero$, d'où l'on déduit le résultat.

On a les propriétés élémentaires suivantes de ces ensembles $A(g)$ ($g \in G$) :

\begin{fait}
\label{fait-inv}
L'ensemble $S$ est un sous-groupe de $G$. De plus, si $g \in G$ et $h \in S$, on a $A(hg) = A(g) \modzero$ et $A(g)h = A(gh) \modzero$. Enfin, $A(e) = \Gamma\backslash G$.
\end{fait}  

\begin{lem}
L'application $g \mapsto \nu(A(g))$ est continue. En particulier, le sous-groupe $S$ de $G$ est fermé.
\end{lem}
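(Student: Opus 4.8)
The plan is to prove that $g \mapsto \nu(A(g))$ is continuous; the closedness of $S$ then follows at once, since $\nu(A(g)) \le \nu(\Gamma\backslash G)$ for every $g$, so that $S$ is precisely the set where this continuous function attains its maximum value $\nu(\Gamma\backslash G)$, i.e. the preimage of a point under a continuous map. The heart of the matter is therefore to find a tractable expression for $\nu(A(g))$. Writing $\pi\colon G \to \Gamma\backslash G$ for the projection, one first observes that $A(g) = \pi(P\cap Pg)$, since $\pi$ collapses each $\gamma(P\cap Pg)$ onto $\pi(P\cap Pg)$. The difficulty is that $\mathbf{1}_P$ need not be integrable — indeed $\mu(P)$ is typically infinite, because $P$ is $H$-invariant with $H$ of non-compact closure — so one cannot directly invoke continuity of translation applied to $\mathbf{1}_P$.

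To obtain a finite model I would exploit the tiling hypothesis. Let $\Delta := \{\gamma\in\Gamma : \gamma P = P \modzero\}$, which is a subgroup of $\Gamma$ (the stabiliser of $P$, already implicit in Fait~\ref{fait-inv}). Since the distinct translates $\gamma P$ partition $G$ modulo null sets and $\Delta$ acts freely on $P$ by left multiplication, any fundamental domain $P_0\subseteq P$ for the action of $\Delta$ on $P$ is in fact a fundamental domain for $\Gamma$ acting on $G$: the sets $\gamma P_0$, for $\gamma\in\Gamma$, tile $G$ modulo null sets. In particular $\mu(P_0) = \nu(\Gamma\backslash G) < \infty$, so $\mathbf{1}_{P_0}\in L^1(G)$.

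The key computation is then $\nu(A(g)) = \mu(P_0\cap Pg)$. Indeed $\nu(A(g)) = \mu(P_0\cap \tilde A(g))$, because $P_0$ is a fundamental domain and $\tilde A(g)$ is $\Gamma$-invariant; and for almost every $x\in P_0\subseteq P$, the orbit points of $x$ lying in $P$ are exactly the $\delta x$ with $\delta\in\Delta$, which all lie in $Pg$ or none do, since $\delta x\in Pg \iff x\in\delta^{-1}Pg = Pg$ (using $\delta^{-1}P = P \modzero$). Hence $x\in\tilde A(g)$ if and only if $x\in Pg$, giving $P_0\cap\tilde A(g) = P_0\cap Pg \modzero$.

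Finally, using that $G$ is unimodular (so $\mu$ is also right-invariant), I would rewrite
\[
\nu(A(g)) = \mu(P_0 g^{-1}\cap P) = \int_P \mathbf{1}_{P_0}(xg)\,d\mu(x),
\]
and bound, for any $g_0\in G$,
\[
\bigl|\nu(A(g)) - \nu(A(g_0))\bigr| \le \int_G \bigl|\mathbf{1}_{P_0}(xg) - \mathbf{1}_{P_0}(xg_0)\bigr|\,d\mu(x).
\]
The right-hand side is the $L^1$-distance between two right translates of $\mathbf{1}_{P_0}$, which tends to $0$ as $g\to g_0$ by continuity of right translation in $L^1(G)$ — applicable precisely because $\mathbf{1}_{P_0}\in L^1(G)$. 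This yields the continuity. The main obstacle is exactly the passage from the infinite-measure set $P$ to the finite-measure fundamental domain $P_0$: once $\nu(A(g))$ has been expressed through $\mathbf{1}_{P_0}$ alone, the bound above simply discards the factor $\mathbf{1}_P\le 1$ and reduces everything to the standard $L^1$-continuity of translation.
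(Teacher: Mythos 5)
Your proof is correct, but it takes a genuinely different route from the paper's. The paper works with an arbitrary Borel fundamental domain $\mathscr{F}$ of $\Gamma$ and the countable family $\mathscr{X}$ of distinct tiles $\gamma P$: it writes $\nu(A(g)) = \sum_{Q \in \mathscr{X}} \mu\left(\mathscr{F} \cap Q \cap Qg\right)$, proves each summand continuous by $L^1$-continuity of translation applied to the finite-measure set $\mathscr{F} \cap Q$, and then needs a domination argument (each term is bounded by $\mu(\mathscr{F} \cap Q)$, and these sum to $\mu(\mathscr{F}) < \infty$) to pass continuity through the infinite sum. You instead adapt the fundamental domain to the tiling: introducing the stabilizer $\Delta = \{\gamma \in \Gamma : \gamma P = P \modzero\}$ and taking a fundamental domain $P_0 \subseteq P$ for $\Delta$ acting on $P$, you obtain a fundamental domain for all of $\Gamma$ sitting inside $P$, whence the closed formula $\nu(A(g)) = \mu(P_0 \cap Pg)$ and continuity in a single application of $L^1$-continuity of translation — no series, no dominated convergence. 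Three points you should make explicit to complete the writeup: (i) the existence of $P_0$ (standard: take $P \cap \mathscr{F}_\Delta$ where $\mathscr{F}_\Delta$ is a Borel fundamental domain of the discrete subgroup $\Delta$ in $G$); (ii) the mod-$0$ identifications such as $\delta Pg = Pg \modzero$ involve null sets depending on $g$, which is harmless since you only need $P_0 \cap \tilde{A}(g) = P_0 \cap Pg \modzero$ for each \emph{fixed} $g$ to get the exact numerical identity, after which continuity is proved directly for $g \mapsto \mu(P_0 \cap Pg)$; (iii) like the paper's proof, yours uses the standing reduction made just before the lemma, namely that $\bigcup_{\gamma \in \Gamma} \gamma P$ has full measure — this is what makes $\{\gamma P_0\}_{\gamma \in \Gamma}$ a tiling of $G$ and gives $\mu(P_0) = \nu(\Gamma\backslash G) < \infty$. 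The trade-off: the paper's decomposition requires no structural analysis of $P$ (any fundamental domain works, at the cost of an infinite sum), while your choice of $P_0$ isolates the cleaner identity $\nu(A(g)) = \mu(P_0 \cap Pg)$, which also makes transparent why the function is continuous.
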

\begin{proof}
Soit $\mathscr{F} \subset G$ un domaine fondamental borélien de $\Gamma$. Soit $\mathscr{X}$ un ensemble de translatés de $P$ à gauche par $\Gamma$ tel que les éléments de $\mathscr{X}$ sont deux à deux disjoints, à ensembles de mesure nulle près, et leur réunion est de mesure pleine. On a donc $\tilde{A}(g) = \bigcup_{Q \in \mathscr{X}} (Q \cap Qg) \modzero$
Alors \[g \mapsto \nu(A(g)) = \mu\left(\mathscr{F} \cap \tilde{A}(g)\right) = \sum_{Q \in \mathscr{X}}\mu\left(\mathscr{F} \cap Q \cap Qg\right)\]

Pour tout $Q \in \mathscr{X}$, l'application $g \mapsto \mu\left(\mathscr{F} \cap Q \cap Qg\right) = \mu\left(Q \cap \left(\mathscr{F} \cap Q\right)g^{-1}\right)$ est continue. En effet, pour tous $g, h \in G$, on a :
\[\left|\mu\left(Q \cap \left(\mathscr{F} \cap Q\right)g^{-1}\right) - \mu\left(Q \cap \left(\mathscr{F} \cap Q\right)h^{-1}\right)\right| \leq \norm{1_{(\mathscr{F} \cap Q)g^{-1}} - 1_{(\mathscr{F} \cap Q)h^{-1}}}_{L^1(G, \mu)}\]qui tend vers $0$ quand $g$ tend vers $h$ par un raisonnement classique par densité et invariance de $\mu$ (ici, $1_X$ désigne bien entendu la fonction indicatrice du borélien $X$).

Finalement, par convergence dominée, l'application $g \mapsto \sum_{Q \in \mathscr{X}} \mu\left(\mathscr{F} \cap Q \cap Qg^{-1}\right)$ est bien continue.
\end{proof}

Dans la suite, on note $\mathfrak{s} \subset \mathfrak{g}$ l'algèbre de Lie de $S$.

\begin{lem}[Phénomène de Mautner]
\label{mautner}
Soient $a \in S$ et $g\in G$ tel que $a^nga^{-n}$ converge vers l'élément neutre $e$ de $G$ quand $n \longrightarrow \infty$. Alors $g$ appartient à $S$.
\end{lem}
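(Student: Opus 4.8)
The plan is to run the classical Mautner argument, exploiting three ingredients already established: the translation identities of Fait~\ref{fait-inv}, the continuity of $g \mapsto \nu(A(g))$ proved in the preceding lemma, and the right $G$-invariance of $\nu$. The core idea is that $\nu(A(a^n g a^{-n}))$ will turn out to be \emph{constant} in $n$ and equal to $\nu(A(g))$, while continuity forces it to converge to $\nu(A(e)) = \nu(\Gamma\backslash G)$; comparing these two facts yields $\nu(A(g)) = \nu(\Gamma\backslash G)$, which is exactly the membership $g \in S$.

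First I would rewrite $A(a^n g a^{-n})$ in terms of $A(g)$ up to null sets, using that $a \in S$ and that $S$ is a subgroup (Fait~\ref{fait-inv}), so that $a^n, a^{-n} \in S$. Writing $a^n g a^{-n} = a^n \cdot (g a^{-n})$ and applying the left-invariance $A(hg') = A(g') \modzero$ with $h = a^n \in S$ and $g' = g a^{-n}$, I obtain $A(a^n g a^{-n}) = A(g a^{-n}) \modzero$. Then applying the right-translation identity $A(g)h = A(gh) \modzero$ with $h = a^{-n} \in S$ gives $A(g a^{-n}) = A(g) a^{-n} \modzero$. Chaining the two yields $A(a^n g a^{-n}) = A(g) a^{-n} \modzero$.

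Next I would take $\nu$-measures. Since $\nu$ is $G$-invariante à droite, the right translate by $a^{-n} \in G$ preserves measure, so $\nu(A(a^n g a^{-n})) = \nu(A(g) a^{-n}) = \nu(A(g))$ for every $n$. On the other hand, the hypothesis $a^n g a^{-n} \to e$ together with the continuity of $g \mapsto \nu(A(g))$ gives $\nu(A(a^n g a^{-n})) \longrightarrow \nu(A(e)) = \nu(\Gamma\backslash G)$. Comparing, $\nu(A(g)) = \nu(\Gamma\backslash G)$, i.e. $g \in S$.

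This argument is essentially bookkeeping, and I do not expect any substantial obstacle. The only point requiring care is chaining the two invariance identities of Fait~\ref{fait-inv} in the correct order, so that the left factor $a^n$ is absorbed (left-invariance) while the right factor $a^{-n}$ is converted into a right translate of $A(g)$ that $\nu$ ignores; once the conjugation $A(a^n g a^{-n}) = A(g) a^{-n} \modzero$ is correctly set up, continuity finishes the proof immediately.
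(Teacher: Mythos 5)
Your proof is correct and is essentially the paper's own argument: the paper's one-line chain $\nu(A(g)) = \nu(A(g)a^{-n}) = \nu(A(a^nga^{-n})) \to \nu(\Gamma\backslash G)$ uses exactly the same ingredients you do — both translation identities of Fait~\ref{fait-inv}, the right-invariance of $\nu$, and the continuity lemma — merely written more compactly. Your version just makes explicit the intermediate step $A(ga^{-n})$ and the fact that $a^{\pm n} \in S$ because $S$ is a subgroup.
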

\begin{proof}
En effet, on a, en utilisant le fait \ref{fait-inv} :
\[\nu(A(g)) = \nu(A(g)a^{-n}) = \nu(A(a^nga^{-n})) \longrightarrow_{n \longrightarrow \infty} \nu(\Gamma\backslash G)\]
\end{proof}

\begin{lem} 
Supposons qu'il existe $a \in S$ un élément hyperbolique non trivial. Alors $S = G$.
\label{hyperbolique-inv}
\end{lem}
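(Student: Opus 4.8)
Le plan est de faire entrer dans $\mathfrak{s}$, grâce au phénomène de Mautner (lemme \ref{mautner}), les sous-algèbres horosphériques contractante et dilatante associées à $a$, puis de montrer que celles-ci engendrent $\mathfrak{g}$ tout entière — c'est là qu'interviendra la simplicité de $\mathfrak{g}$. J'écrirais $a = \exp(Y)$ ; comme $a$ est hyperbolique, $\ad Y$ est diagonalisable sur $\R$ et l'on dispose de la décomposition en sous-espaces propres
\[\mathfrak{g} = \bigoplus_{t \in \R}\mathfrak{g}^{(t)}, \qquad \mathfrak{g}^{(t)} := \{Z \in \mathfrak{g} \,:\, [Y, Z] = tZ\}.\]
Je poserais $\mathfrak{u}^- := \bigoplus_{t < 0}\mathfrak{g}^{(t)}$ et $\mathfrak{u}^+ := \bigoplus_{t > 0}\mathfrak{g}^{(t)}$, de sorte que $\mathfrak{g} = \mathfrak{u}^- \oplus \mathfrak{g}^{(0)} \oplus \mathfrak{u}^+$. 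Le fait que $a$ soit hyperbolique non trivial signifie que $\ad Y \neq 0$, d'où l'existence d'un $t > 0$ avec $\mathfrak{g}^{(t)} \neq 0$, et donc $\mathfrak{u}^+ \neq 0$.

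Ensuite, pour $X \in \mathfrak{u}^-$, toutes les valeurs propres de $\mathrm{Ad}(a)$ sur $\mathfrak{u}^-$ étant de module strictement inférieur à $1$, on a $a^n \exp(X) a^{-n} = \exp\!\left(\mathrm{Ad}(a^n)X\right) \longrightarrow e$, et le lemme \ref{mautner} donne $\exp(X) \in S$. Comme $sX \in \mathfrak{u}^-$ pour tout $s \in \R$ et que $S$ est fermé, il s'ensuit $\mathfrak{u}^- \subseteq \mathfrak{s}$. En appliquant le même argument à $a^{-1} \in S$, lui aussi hyperbolique, j'obtiendrais de même $\mathfrak{u}^+ \subseteq \mathfrak{s}$. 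Ainsi $\mathfrak{s}$ contient la sous-algèbre $\mathfrak{h}$ engendrée par $\mathfrak{u}^+ + \mathfrak{u}^-$.

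Il restera à établir que $\mathfrak{h} = \mathfrak{g}$ : c'est l'étape que j'attends comme principale difficulté. L'idée est de montrer que $\mathfrak{h}$ est un idéal de $\mathfrak{g}$, ce qui conclut aussitôt par simplicité puisque $\mathfrak{h} \supseteq \mathfrak{u}^+ \neq 0$. En exploitant la graduation définie par $\ad Y$, on vérifie que $\mathfrak{h} = \mathfrak{z} \oplus \mathfrak{u}^+ \oplus \mathfrak{u}^-$, où $\mathfrak{z} := \sum_{t > 0}[\mathfrak{g}^{(t)}, \mathfrak{g}^{(-t)}] \subseteq \mathfrak{g}^{(0)}$. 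Comme $\mathfrak{h}$ est déjà une sous-algèbre contenant $\mathfrak{u}^{\pm}$ et que $\ad Z$ préserve la graduation pour $Z \in \mathfrak{g}^{(0)}$, l'inclusion $[\mathfrak{g}, \mathfrak{h}] \subseteq \mathfrak{h}$ se ramène aux crochets $[Z, [X_t, X_{-t}]]$ avec $Z \in \mathfrak{g}^{(0)}$ et $X_{\pm t} \in \mathfrak{g}^{(\pm t)}$ ; l'identité de Jacobi
\[[Z, [X_t, X_{-t}]] = [[Z, X_t], X_{-t}] + [X_t, [Z, X_{-t}]]\]
montre qu'ils restent dans $[\mathfrak{g}^{(t)}, \mathfrak{g}^{(-t)}] \subseteq \mathfrak{z}$. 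Donc $\mathfrak{h}$ est un idéal, d'où $\mathfrak{h} = \mathfrak{g}$ et $\mathfrak{s} = \mathfrak{g}$. Enfin, $S$ étant fermé et de même algèbre de Lie que $G$, c'est un sous-groupe ouvert du groupe connexe $G$, et donc $S = G$.
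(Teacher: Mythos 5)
Your proof is correct and follows essentially the same route as the paper's: Mautner's phenomenon (applied to $a$ and to $a^{-1} \in S$) puts the expanding and contracting eigenspaces of $\mathrm{ad}\,Y$ into $\mathfrak{s}$, and the subalgebra they generate is a nonzero ideal of $\mathfrak{g}$, hence all of $\mathfrak{g}$ by simplicity. The only differences are cosmetic: you treat $\mathfrak{u}^{\pm}$ as a whole rather than eigenspace by eigenspace, and you spell out explicitly (via $\mathfrak{h} = \mathfrak{z} \oplus \mathfrak{u}^{+} \oplus \mathfrak{u}^{-}$ and the Jacobi identity) the ideal property that the paper asserts in one line from the relation $[\mathfrak{g}^{\lambda}, \mathfrak{g}^{\mu}] \subset \mathfrak{g}^{\lambda+\mu}$.
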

Ici on dit qu'un élément $a$ est \emph{hyperbolique} s'il existe $X \in \mathfrak{g}$ avec $\mathrm{ad}\,X$ diagonalisable sur $\R$ tel que $a = \exp(X)$.
\begin{proof}
Soit $a = \exp(X)$ un élément hyperbolique non trivial. Pour $\lambda\in \R$, soit $\mathfrak{g}^\lambda \subset \mathfrak{g}$ le sous-espace propre de $\mathrm{ad}\,X$ correspondant à la valeur propre $\lambda$. Soient $\mathfrak{g}^{+} = \bigoplus_{\lambda > 0} \mathfrak{g}^\lambda$ et $\mathfrak{g}^{-} = \bigoplus_{\lambda < 0} \mathfrak{g}^\lambda$. 

\begin{fait}
La sous-algèbre de Lie engendrée par $\mathfrak{g}^{+} \cup \mathfrak{g}^{-}$ est égale à $\mathfrak{g}$.
\end{fait}
\begin{proof}En effet, si l'on note $\mathfrak{l}$ la sous-algèbre de Lie qu'ils engendrent, la relation $[\mathfrak{g}^\lambda, \mathfrak{g}^\mu] \subset \mathfrak{g}^{\lambda + \mu}$, valable pour tous $\lambda, \mu \in \R$, montre que $\mathfrak{l}$ est un idéal. On a donc $\mathfrak{l} = \mathfrak{g}$ ou $\mathfrak{l} = 0$. Dans ce dernier cas, on a $\mathrm{ad}\,X = 0$, donc $X = 0$ par simplicité, et ainsi $a = 1$, ce qui est faux. Donc $\mathfrak{l} = \mathfrak{g}$.
\end{proof}

\begin{fait}
On a $\mathfrak{g}^+ \cup \mathfrak{g}^{-} \subset \mathfrak{s}$.
\end{fait}
\begin{proof}
Soit $Y \in \mathfrak{g}^{\lambda}$ et notons $g = \exp(Y)$. Alors \[a^nga^{-n} = \exp((\mathrm{Ad}\,a^n)Y) = \exp(e^{n(\mathrm{ad}\,X)}Y) = \exp(e^{n \lambda}Y)\]
Ainsi, lorsque $n \longrightarrow \pm\infty$ (suivant le signe de $\lambda$), $a^nga^{-n} \longrightarrow e$. Par le phénomène de Mautner (lemme \ref{mautner}), on en déduit le résultat, puisque $a \in S$.
\end{proof}
Par les deux faits précédents, on a donc $\mathfrak{s} = \mathfrak{g}$, d'où $S = G$ puisque $G$ est connexe.
\end{proof} 

\begin{lem}
Supposons qu'il existe $u \in S$ un élément unipotent non trivial. Alors $S = G$.
\end{lem}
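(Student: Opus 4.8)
The plan is to reduce to Lemma~\ref{hyperbolique-inv} by exhibiting a nontrivial hyperbolic element of $S$. Write $u = \exp(N)$ with $N$ nilpotent and nonzero (as $u \neq e$), and invoke the Jacobson--Morozov theorem to embed $N$ in an $\mathfrak{sl}_2$-triple $(N,H,N^-)$, so that $[H,N]=2N$, $[H,N^-]=-2N^-$, $[N,N^-]=H$, and $a_s := \exp(sH)$ is hyperbolic. It suffices to prove that $a_1 \in S$, for then Lemma~\ref{hyperbolique-inv} gives $S=G$. Note at once that the contracting mechanism of Lemma~\ref{mautner} is unavailable: since $\ad N$ is nilpotent, $u^n g u^{-n} = \exp(e^{n\,\ad N}\log g)$ grows polynomially in $n$ and never tends to $e$ for $g \neq e$, so $u$ contracts nothing.

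First I would promote the single element $u$ to the whole unipotent subgroup $\{\exp(tN) : t \in \R\}$ by a renormalisation fitting the style of the previous lemmas. Fix $t \in \R$ and set $g_n := \exp\!\left(-\tfrac{t}{2n}H\right)$, so $g_n \to e$. A direct computation with $\ad N$ gives
\[u^n g_n u^{-n} = \exp\!\left(e^{n\,\ad N}\left(-\tfrac{t}{2n}H\right)\right) = \exp\!\left(tN - \tfrac{t}{2n}H\right) \xrightarrow[n\to\infty]{} \exp(tN).\]
Since $u^n \in S$, conjugation invariance (as in the proof of Lemma~\ref{mautner}) gives $\nu(A(g_n)) = \nu(A(u^n g_n u^{-n}))$; letting $n\to\infty$ and using continuity of $g \mapsto \nu(A(g))$ on both sides yields $\nu(A(\exp(tN))) = \nu(A(e)) = \nu(\Gamma\backslash G)$. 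Hence $\exp(tN) \in S$ for all $t$, i.e. $N \in \mathfrak{s}$.

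The hard part is the passage from invariance under $\{\exp(tN)\}$ to invariance under $a_s$, the genuine Mautner phenomenon; the computation above shows it cannot be a contraction, since conjugating or translating $a_s$ by $\{\exp(tN)\}$ only moves it within its Bruhat cell, which stays bounded away from $e$, so continuity alone can never reach it. To break this I would exploit the Hilbert space structure concealed in $\nu(A(\cdot))$. Letting $\Phi(x)$ be, for a.e. $x$, the basis vector of $\ell^2(\{\gamma P\}_{\gamma\in\Gamma})$ indexed by the tile containing $x$, one checks that $\Phi$ is a square-integrable section of the flat $\ell^2$-bundle over $\Gamma\backslash G$ attached to the permutation action of $\Gamma$ on the tiles, and that for the unitary representation $\rho$ of $G$ by right translation on such sections one has $\nu(A(g)) = \langle \rho(g)\Phi, \Phi\rangle$. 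Thus $\nu(A(\cdot))$ is a matrix coefficient, $S$ is exactly the stabiliser $\{g : \rho(g)\Phi = \Phi\}$, and the previous step says $\Phi$ is fixed by $\{\exp(tN)\}$.

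It then remains to apply the Mautner phenomenon for $\mathrm{SL}_2(\R)$: a vector fixed by the unipotent subgroup $\{\exp(tN)\}$ is fixed by the normalising hyperbolic subgroup $\{a_s\}$. This is exactly where positive-definiteness is indispensable. One way is to decompose $\rho$ along the subgroup $G' \cong \mathrm{SL}_2(\R)$ generated by the triple: nontrivial unitary irreducible representations of $\mathrm{SL}_2(\R)$ carry no nonzero $\{\exp(tN)\}$-fixed vector, so the $\{\exp(tN)\}$-fixed $\Phi$ lies in the $G'$-fixed part of $\rho$ and is in particular $a_s$-fixed; alternatively one argues by weak limits of $\rho(a_{-s})\Phi$ as $s\to+\infty$, using that $a_s$ contracts the opposite unipotent $\{\exp(rN^-)\}$. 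Either way $a_1 \in S$, and Lemma~\ref{hyperbolique-inv} gives $S=G$. I expect this final transfer from $N$ to $H$ through the Hilbert space to be the only genuine obstacle, the rest reducing to the continuity-and-conjugation bookkeeping already used in the hyperbolic case.
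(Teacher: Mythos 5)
Your proof is correct, but its engine is genuinely different from the paper's. Both arguments share the same skeleton — Jacobson--Morozov gives a triple $(N,H,N^-)$ and everything reduces to putting the hyperbolic element $\exp(sH)$ into $S$ so that Lemma~\ref{hyperbolique-inv} applies — yet each half is done by different means. Your first step, the renormalised conjugation $g_n=\exp(-\tfrac{t}{2n}H)\to e$ with $u^ng_nu^{-n}\to\exp(tN)$ and $\nu(A(u^ng_nu^{-n}))=\nu(A(g_n))$ from Fait~\ref{fait-inv}, is a clean two-sided strengthening of Lemma~\ref{mautner}; the paper instead simply asserts the inclusion $\{\exp(tX):t\in\R\}\subset S$ (it can be justified by Fait~\ref{fait-normalisateur} applied to the cyclic group $\langle u\rangle$, which the one-parameter group centralises), so on this point your treatment is the more careful one. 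For the second half, the paper applies Fait~\ref{fait-normalisateur} again: $\exp(\lambda H)$ normalises the non-compact subgroup $\{\exp(tN)\}\subset S$, hence each $A(\exp(\lambda H))$ is left $\Gamma$-invariant and right-invariant under a non-compact subgroup, Howe--Moore forces $\nu(A(\exp(\lambda H)))\in\{0,\nu(\Gamma\backslash G)\}$, and continuity plus connectedness concludes. You instead realise $\nu(A(\cdot))$ as the matrix coefficient $\langle\rho(g)\Phi,\Phi\rangle$ of the representation induced from the permutation action of $\Gamma$ on the tiles, identify $S$ with the stabiliser of $\Phi$ (both identifications are correct, the latter by the equality case of Cauchy--Schwarz), and invoke the Mautner phenomenon for unitary representations of $\mathrm{SL}_2(\R)$. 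This buys conceptual clarity — it explains in one stroke why $\nu(A(\cdot))$ is continuous and positive-definite and why $S$ is a closed subgroup — at the cost of importing $\mathrm{SL}_2(\R)$ representation theory where the paper merely re-uses the Howe--Moore theorem it had already stated; note also that the analytic subgroup generated by the triple need not be closed in $G$ nor isomorphic to $\mathrm{SL}_2(\R)$ (it may be a quotient or a cover), which is harmless since Mautner holds for any connected group locally isomorphic to it, but deserves a word.

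One of your side claims is false, and instructively so: you argue that, since $\exp(tN)\exp(sH)\exp(t'N)$ stays in the Bruhat cell of $\exp(sH)$, bounded away from $e$, ``continuity alone can never reach'' the hyperbolic element — this is what drives you into the Hilbert space. But the classical Mautner computation goes the other way round: one does not move $\exp(sH)$ towards $e$, one exhibits elements converging to $\exp(sH)$ whose double cosets reach $e$. Explicitly, with $a(r)=(e^{s}-1)/r$ and $b(r)=(e^{-s}-1)/r$ one has
\[\exp\!\left(\tfrac{e^{s}-1}{r}\,N\right)\exp(rN^{-})\exp\!\left(\tfrac{e^{-s}-1}{r}\,N\right)=\exp\!\left(sH+\tfrac{rs}{\sinh s}\,N^{-}\right)\xrightarrow[r\to0]{}\exp(sH),\]
an identity between products of exponentials of elements of the subalgebra, checked in $\mathrm{SL}_2(\R)$ and transferred to $G$ through the universal cover. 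Since $\exp(a(r)N),\exp(b(r)N)\in S$ by your first step, Fait~\ref{fait-inv} and right-invariance of $\nu$ give $\nu\!\left(A\!\left(\exp(a(r)N)\exp(rN^{-})\exp(b(r)N)\right)\right)=\nu(A(\exp(rN^{-})))$, and letting $r\to0$ the continuity lemma yields $\nu(A(\exp(sH)))=\nu(A(e))$, i.e. $\exp(sH)\in S$. So bi-$S$-invariance plus continuity does reach the hyperbolic element, with no unitary representation and no Howe--Moore: the step you expected to be the only genuine obstacle dissolves, and your first step combined with this observation gives a proof more elementary than both your own and the paper's.
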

Ici on dit qu'un élément $u$ est \emph{unipotent} s'il existe $X \in \mathfrak{g}$ avec $\mathrm{ad}\,X$ nilpotent tel que $u = \exp(X)$.
\begin{proof}
Soit $u = \exp(X)$ avec $X \in \mathfrak{g}$ tel que $\mathrm{ad}\,X$ soit nilpotent. Soit $L = \{\exp(tX),\,t\in\R\} \subset S$. Alors $L$ est d'adhérence non compacte (car $\mathrm{Ad}\,S'$ est d'adhérence non compacte).
Par le théorème de Jacobson-Morozov (voir Helgason \cite[théorème 7.4]{helgason1979differential}), il existe $H, Y \in \mathfrak{g}$
tels que
\[[H, X] = 2X\text{,}\;[H, Y] = -2Y\text{ et }[X, Y] = H\]
Ainsi $\R H \oplus \R X \oplus \R Y$ est une sous-algèbre de Lie isomorphe à $\mathfrak{sl}_2(\R)$. La structure des représentations de $\mathfrak{sl}_2(\R)$ montre alors que $\mathrm{ad}\,H$ est diagonalisable sur $\R$. Notons que l'on a, pour tous $\lambda, t\in \R$ :
\[\exp(\lambda H)\exp(t X)\exp(- \lambda H) = \exp(te^{\lambda(\mathrm{ad} H)}X) = \exp(te^{2\lambda}X)\]
 Par le fait \label{fait-normalisateur} ci-dessous, on a donc $e^{\lambda H}\in S$ pour tout $\lambda \in \R$. Par le lemme \ref{hyperbolique-inv}, on a donc $S = G$.
\begin{fait}\label{fait-normalisateur}
	Soit $T \subset S$ un sous-groupe d'adhérence non compacte de $G$, et $L$ un sous-groupe de $G$. On suppose que pour tout $g \in L$, on a $gTg^{-1} \subset S$. Alors la composante neutre $L^\circ$ de $L$ est incluse dans $S$.
\end{fait}
\begin{proof}
On a pour tout $h \in T$ et $g \in L$, grâce au fait \ref{fait-inv} :
	\[A(g)h = A(gh) = A(ghg^{-1}g) = A(g) \modzero\]
	Donc par le théorème de Howe-Moore, on a $\nu(A(g)) \in \{0, 1\}$. Or par continuité, on a $\nu(A(g)) > 0$ dans un voisinage de l'identité de $L$, d'où le résultat. 
\end{proof}

\end{proof}

\begin{lem}
	\label{groupe-tout}
Le groupe $S$ contient soit un élément unipotent, soit un élément hyperbolique, et donc $S = G$.
\end{lem}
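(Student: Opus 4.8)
Le plan est d'exhiber dans $S$ un élément non elliptique, puis de pousser ses composantes de Jordan hyperbolique et unipotente à l'intérieur de $S$ grâce au fait \ref{fait-normalisateur}, afin de se ramener au lemme \ref{hyperbolique-inv} ou au lemme relatif aux éléments unipotents. Comme $P$ est $H$-invariante à droite, on a $H \subset S$, donc $\overline{H} \subset S$ puisque $S$ est fermé ; en particulier $S$ est un sous-groupe fermé non compact de $G$. Je commencerais par justifier qu'un tel $S$ contient un élément non elliptique : si la composante neutre $S^\circ$ est non compacte, son algèbre de Lie $\mathfrak s$ n'est pas compactement plongée dans $\mathfrak g$, donc contient un élément $X$ tel que $\ad X$ possède une valeur propre de partie réelle non nulle ou une partie nilpotente non nulle, et alors $s := \exp X \in S$ est non elliptique ; si $S^\circ$ est compacte mais $S$ non, on choisit $s \in S$ dont l'image dans le groupe discret $S/S^\circ$ est d'ordre infini, et un tel $s$ est non elliptique, faute de quoi $\overline{\langle s \rangle}$ serait compact et son image dans $S/S^\circ$ finie.

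Soit donc $s \in S$ un élément non elliptique, de décomposition de Jordan réelle $s = s_e s_h s_u$, où $s_e$ est elliptique, $s_h$ hyperbolique, $s_u$ unipotent, les trois commutant deux à deux ; la non-ellipticité signifie que $s_h \neq e$ ou $s_u \neq e$. Le sous-groupe $T := \overline{\langle s \rangle}$ est alors d'adhérence non compacte (car $\mathrm{Ad}(s)^n$ est non borné) et contenu dans $S$, qui est fermé. Le centralisateur $L := Z_G(s)$ centralise $T$, de sorte que $g T g^{-1} = T \subset S$ pour tout $g \in L$ ; le fait \ref{fait-normalisateur} fournit alors l'inclusion $Z_G(s)^\circ = L^\circ \subset S$.

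Il reste à vérifier que les composantes $s_h$ et $s_u$ sont reliées à l'identité à l'intérieur de ce centralisateur. En écrivant $s_h = \exp X_h$ et $s_u = \exp X_u$ avec $\ad{X_h}$ diagonalisable sur $\R$ et $\ad{X_u}$ nilpotent, on a $X_h, X_u \in \mathfrak z_{\mathfrak g}(s)$ : en effet $\ad{X_h}$ et $\ad{X_u}$ commutent avec $\mathrm{Ad}(s)$ (propriété standard de la décomposition de Jordan de l'opérateur $\mathrm{Ad}(s)$), de sorte que $\ad(\mathrm{Ad}(s)X_h) = \mathrm{Ad}(s)\,\ad{X_h}\,\mathrm{Ad}(s)^{-1} = \ad{X_h}$, et la simplicité de $\mathfrak g$ (centre nul) force $\mathrm{Ad}(s)X_h = X_h$, de même pour $X_u$. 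Ainsi $\exp(\R X_h)$ et $\exp(\R X_u)$ sont inclus dans $Z_G(s)^\circ \subset S$, et comme $X_h \neq 0$ ou $X_u \neq 0$, le groupe $S$ contient soit l'élément hyperbolique non trivial $\exp(X_h)$, soit l'élément unipotent non trivial $\exp(X_u)$. Le lemme \ref{hyperbolique-inv}, ou le lemme relatif aux éléments unipotents, donne alors $S = G$.

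Le point qui me semble demander le plus de soin est l'étape structurelle initiale : garantir qu'un sous-groupe fermé non compact de $G$ contient un élément non elliptique, ce qui repose sur le fait qu'une sous-algèbre de $\mathfrak g$ formée d'éléments elliptiques est compactement plongée (autrement dit qu'un sous-groupe connexe dont tous les éléments sont elliptiques est relativement compact). L'autre vérification délicate est que les parties de Jordan $s_h$ et $s_u$ atterrissent bien dans la composante neutre du centralisateur $Z_G(s)$, ce qui est précisément ce qui rend le fait \ref{fait-normalisateur} applicable et permet d'en extraire un générateur hyperbolique ou unipotent.
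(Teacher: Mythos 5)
La seconde moitié de votre argument --- une fois acquis un élément non elliptique $s \in S$ --- est correcte et coïncide essentiellement avec la preuve de l'article : décomposition de Jordan réelle de $\mathrm{Ad}\,s$, commutation des logarithmes $X_h, X_u$ avec $s$, application du fait \ref{fait-normalisateur} à $T = \overline{\langle s \rangle}$ (non compact, inclus dans $S$ fermé) et $L = Z_G(s)$ pour obtenir $\exp(X_h), \exp(X_u) \in Z_G(s)^\circ \subset S$, puis conclusion par le lemme \ref{hyperbolique-inv} ou le lemme unipotent. Votre vérification de $\mathrm{Ad}(s)X_h = X_h$ via la simplicité de $\mathfrak{g}$ est même un peu plus directe que l'argument de l'article, qui passe par un commutateur à valeurs dans le centre fini de $G$.

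En revanche, la première étape --- produire un élément non elliptique dans $S$ --- comporte une lacune réelle. Dans votre second cas ($S^\circ$ compact, $S$ non compact), vous choisissez $s \in S$ dont l'image dans $S/S^\circ$ est d'ordre infini : un tel élément n'a aucune raison d'exister, car un groupe discret infini peut être de torsion (groupes de Burnside infinis, groupe de Grigorchuk). Pire : l'hypothèse qu'il s'agit précisément d'exclure --- tous les éléments de $S$ sont elliptiques --- implique, par votre propre observation sur $\overline{\langle s \rangle}$, que $S/S^\circ$ est de torsion ; supposer l'existence d'un élément d'ordre infini revient donc à supposer ce qu'on veut démontrer. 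Quant à votre premier cas, il repose sur le fait, énoncé sans preuve, qu'une sous-algèbre de $\mathfrak{g}$ formée d'éléments elliptiques est compactement plongée : c'est en substance le cas connexe du lemme \ref{lemme-elliptique}, et ce n'est pas un fait que l'on peut considérer comme routinier --- la remarque finale de l'article (contre-exemple de Bass) montre que l'ellipticité de tous les éléments n'entraîne pas la compacité relative sans hypothèse de fermeture, et la preuve demande les arguments de réductivité et de forme trace qui occupent la moitié de l'article. Ces deux lacunes se réparent d'un coup en procédant comme l'article : le centre de $G$ étant fini, $\mathrm{Ad}_G(S)$ est un sous-groupe fermé et non compact de $\GL(\mathfrak{g})$, et le lemme \ref{lemme-elliptique} --- dont l'énoncé et la preuve ne supposent aucune connexité, le lemme de Benoist traitant tout sous-groupe irréductible --- fournit directement $g \in S$ tel que $\mathrm{Ad}\,g$ ne soit pas elliptique ; le reste de votre rédaction s'applique alors sans changement.
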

\begin{proof}
Considérons le groupe $\mathrm{Ad}_G(S)$, qui est non-compact, puisque $S$ l'est. Par le lemme \ref{lemme-elliptique} ci-dessous, il existe $g \in S$ tel que $\mathrm{Ad}\,g$ ne soit pas elliptique. Écrivons, par la décomposition de Jordan-Chevalley réelle de $\mathrm{Ad}_G(G)$, $\mathrm{Ad}\,g = ehu$ avec $e$ elliptique, $h$ hyperbolique et $u$ unipotents qui commutent avec $g$. On a donc $h \neq e$ ou $u \neq e$. Alors $h$ et $u$ s'écrivent respectivement $e^{\mathrm{ad}\,H}$ et $e^{\mathrm{ad}\,N}$ où $H, N \in \mathfrak{g}$, $\mathrm{ad}\,H$ est diagonalisable sur $\R$ et $\mathrm{ad}\,N$ est nilpotente \cite[chapitre IX, lemme 7.3 et sa preuve]{helgason1979differential}, et on a $H \neq 0$ ou $N \neq 0$.

Soit $t \in \R$ et $A \in \{H, N\}$. Comme $e^{\mathrm{ad}\,tA}$ commute avec $\mathrm{Ad}\,g$, le commutateur $e^{-tA}ge^{tA}g^{-1}$ appartient au centre de $G$, qui est fini. Donc pour $t$ assez petit, $e^{tA}$ et $g$ commutent, donc le groupe $\{e^{tA}\,:\,t \in \R\}$ normalise le groupe engendré par $g$. Donc par le fait \ref{fait-normalisateur}, $A \in \mathfrak{s}$. D'où le résultat par les deux lemmes précédents.
\end{proof}

\begin{lem}
	\label{lemme-elliptique}
	Soit $G \subset \GL_n(\C)$ un sous-groupe fermé dont tous les éléments sont diagonalisables sur $\C$ à valeurs propres de module $1$.
	Alors $G$ est compact.
\end{lem}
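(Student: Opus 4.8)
The plan is to reduce the statement to boundedness. Since $G$ is closed in $\GL_n(\C) \subset M_n(\C)$, and since the hypothesis forces $\abs{\det g} = 1$ for every $g \in G$ (the eigenvalues of $g$ all have modulus $1$), any convergent sequence in $G$ has an invertible limit, so the closure of $G$ in $M_n(\C)$ stays inside $\GL_n(\C)$ and coincides with $G$. Hence it suffices to show that $G$ is bounded, i.e. that $G$ is contained in a ball of $M_n(\C)$; equivalently, one wants $G$ to preserve a positive-definite Hermitian form, that is, to be conjugate into $U(n)$. Beyond the determinant bound, the ellipticity hypothesis enters only through one more elementary fact: for every $g \in G$ one has $\abs{\mathrm{tr}(g)} \leq n$, since $g$ has exactly $n$ eigenvalues (with multiplicity), each of modulus $1$.

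First I would treat the case where $G$ acts irreducibly on $\C^n$. There, Burnside's theorem gives $\mathrm{span}_\C(G) = M_n(\C)$, so one may choose $g_1, \dots, g_{n^2} \in G$ forming a $\C$-basis of $M_n(\C)$. Non-degeneracy of the trace pairing $(a,b) \mapsto \mathrm{tr}(ab)$ makes the linear map $a \mapsto \left(\mathrm{tr}(a g_j)\right)_{1 \leq j \leq n^2}$ an isomorphism $M_n(\C) \to \C^{n^2}$. Now for any $g \in G$, each product $g g_j$ again lies in $G$ and is therefore elliptic, so $\abs{\mathrm{tr}(g g_j)} \leq n$ for all $j$. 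Thus $G$ is contained in the preimage under this isomorphism of the polydisc $\{\abs{z_j} \leq n\}$, a bounded set; so $G$ is bounded, and the irreducible case is complete.

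The general case would follow by induction on $n$ once I know that $G$ acts \emph{completely reducibly}, i.e. that $\C^n$ splits as a direct sum of $G$-invariant subspaces on which $G$ acts irreducibly. Indeed, the restriction of a diagonalizable operator with unimodular spectrum to an invariant subspace is again of this type, so each irreducible constituent carries an elliptic action and is bounded by the previous step or the inductive hypothesis; since a direct-sum decomposition differs from an orthogonal one by a fixed conjugation, boundedness on each factor yields boundedness of $G$. The main obstacle is exactly this complete reducibility. I would argue it by contradiction: if $G$ preserved a subspace $W$ with no $G$-invariant complement, the resulting non-split extension would, via commutators and differences of group elements, produce a nontrivial unipotent element of $G$ — the phenomenon already visible for $2 \times 2$ upper-triangular matrices, where two non-commuting elliptic elements generate a unipotent — contradicting the assumption that every element of $G$ is diagonalizable. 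I expect this reducibility step, equivalently the control of the off-diagonal cocycle that a single elliptic element fails to bound, to be the genuine difficulty; everything else is soft. (A parallel route treats the identity component first: every $X$ in the Lie algebra of $G$ must be semisimple with purely imaginary eigenvalues, so $\ad X$ is as well and the Lie algebra is of compact type, giving $G^\circ$ compact — but one still faces the same global issue in passing to the full group.)
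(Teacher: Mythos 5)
Your reduction to boundedness and your irreducible case are both correct, and the irreducible case is essentially the paper's own key sub-lemma (attributed to Benoist): where the paper proves non-degeneracy of the trace form on the algebra spanned by the group via Engel's theorem, you invoke Burnside's theorem to identify that algebra with all of $M_n(\C)$, where non-degeneracy is standard; the two arguments are interchangeable, and both use only that the eigenvalues have modulus $1$, not diagonalizability.

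The gap is exactly where you yourself located it, but it is worse than a step left to fill in: the implication you propose to prove --- an invariant subspace with no invariant complement forces a nontrivial unipotent element of $G$ --- is \emph{false} for general subgroups of $\GL_n(\C)$, so the sketched strategy cannot be completed. This is precisely the content of the paper's closing remark citing Bass: there exist (necessarily non-closed) subgroups of $\GL_n(\C)$ all of whose elements are diagonalizable with unimodular eigenvalues and which are not relatively compact. By your own irreducible-case argument, such a group cannot act completely reducibly (otherwise it would be bounded block by block), and yet it contains no nontrivial unipotent. Hence complete reducibility cannot follow from any element-wise argument; the closedness of $G$ must be used at exactly this step, and your sketch never uses it there (only in the soft ``closed and bounded implies compact'' reduction). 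Note also that your $2\times 2$ heuristic does not generalize: for block-triangular matrices with blocks of size at least $2$, the commutator of two elliptic elements has commutators, not identity matrices, as its diagonal blocks, so it need not be unipotent. The paper injects closedness through Lie theory: since $G$ is closed it is a Lie subgroup, and every $X$ in its Lie algebra $\mathfrak{g}$ is diagonalizable because $e^{tX} \in G$; Lie's theorem then shows that the radical $\mathfrak{r}$ is abelian (a bracket $[X,Y]$ with $X, Y \in \mathfrak{r}$ is simultaneously nilpotent and diagonalizable, hence zero), then that $\mathfrak{r}$ is central ($\mathrm{ad}\,X$ is nilpotent and diagonalizable, hence zero), so $\mathfrak{g}$ is reductive with diagonalizably acting centre, and Serre's criterion yields complete reducibility of $G^\circ$; a finite-index argument on the subspaces of the $G^\circ$-decomposition then handles the component group. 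Your parenthetical remark about the Lie algebra of $G$ being of compact type is in fact the germ of the correct argument; the ``global issue'' that made you abandon it is resolved by this passage through $G^\circ$ and finite index, not by producing unipotents.
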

\begin{proof}
	Soit $\mathfrak{g} \subset M_n(\C)$ l'algèbre de Lie de $G$ et $\mathfrak{r}$ le radical de $\mathfrak{g}$. Remarquons que pour tout $X \in \mathfrak{g}$, $e^X$ est diagonalisable, ce qui entraîne que $X$ est diagonalisable, par exemple en utilisant la décomposition de Jordan.
	
	Comme $\mathfrak{r}$ est résoluble, on peut triangulariser simultanément ses éléments sur $\C$. Cela implique que si $X, Y \in \mathfrak{r}$, $[X, Y]$ est triangulaire supérieure stricte dans une base adéquate de $\mathfrak{g}$, donc nilpotent. Comme $[X, Y]$ est diagonalisable sur $\C$, on en déduit que $[X, Y] = 0$. Ainsi, $\mathfrak{r}$ est abélienne.
	
	Si $X \in \mathfrak{r}$ et $Y \in \mathfrak{g}$, on a $[X, Y] \in \mathfrak{r}$ donc $[X, [X, Y]] = 0$, donc $\mathrm{ad}\,X$ induit un endomorphisme nilpotent de $\mathfrak{g}$, donc comme $X$ est diagonalisable sur $\C$, $\mathrm{ad}\,X$ est diagonalisable, donc nul. Donc $\mathfrak{r}$ est inclus dans $\mathfrak{z}(\mathfrak{g})$, le centre de $\mathfrak{g}$. 
	
	Ceci entraîne que l'algèbre de Lie $\mathfrak{g}$ est réductive, et donc $\mathfrak{g} = [\mathfrak{g}, \mathfrak{g}] \oplus \mathfrak{z}(\mathfrak{g})$, où l'algèbre $[\mathfrak{g}, \mathfrak{g}]$ est semi-simple. On peut le montrer directement : la représentation adjointe induit une représentation de $\mathfrak{g}/\mathfrak{r}$ sur $\mathfrak{g}$ qui est complètement réductible car $\mathfrak{g}/\mathfrak{r}$ est semi-simple. Donc le sous-espace invariant $\mathfrak{z}(\mathfrak{g})$ admet un supplémentaire invariant, isomorphe à $\mathfrak{g}/\mathfrak{r}$, donc semi-simple.
	
	Finalement, on déduit de ceci que $G^\circ \subset \mathrm{GL}_n(\C)$ agit de façon complètement réductible (voir Serre \cite[Theorem 5.1]{serre1992lie}). On utilise ici l'hypothèse que $X$ est diagonalisable si $X \in \mathfrak{z}(\mathfrak{g})$. Or on a le lemme suivant.
	
	\begin{lem}[\cite{benoist}]
		Soit $n \geq 1$ et $H \subset \mathrm{GL}(n, \C)$ un sous-groupe de matrices agissant de façon irréductible sur $\C^n$ et tel que tout les éléments de $H$ aient toutes leurs valeurs propres complexes de module $1$. Alors $H$ est relativement compact.
	\end{lem}
	\begin{proof}
		Soit $A$ l'algèbre engendrée par $H$. Soit $I$ le noyau de la forme bilinéaire symétrique $(a, b) \mapsto \mathrm{tr}(ab)$. C'est clairement un idéal de $A$. Si $a \in I$, on a $0 = \mathrm{tr}(a) = \mathrm{tr}(a^2) = \ldots = \mathrm{tr}(a^n)$, donc $a$ est nilpotent. Si $I \neq 0$, par le théorème d'Engel, on déduit que $J$, l'ensemble des vecteurs annulés par tous les éléments de $I$, est non nul. Or $J$ est un sous-espace invariant par $A$. Donc $I = 0$, et la forme $(a, b) \mapsto \mathrm{tr}(ab)$ est non dégénérée.
		
		Soit maintenant $(g_i)$ une base de $A$ formée d'éléments de $H$ et $(e_i)$ la base duale pour la forme $(a, b) \mapsto \mathrm{tr}(ab)$. Si $g \in H$, on a ainsi $g = \sum_i \mathrm{tr}(gg_i)e_i$. Mais $gg_i \in H$, donc $|\mathrm{tr}(gg_i)| \leq n$. Donc $H$ est borné, et donc relativement compact en prenant l'inverse.
	\end{proof}
	 Comme $G$ permute les sous-espaces irréductibles de $G^\circ$, il existe un sous-groupe d'indice fini $G'$ de $G$ qui stabilise chacun de ces sous-espaces. Le lemme précédent entraîne alors que $G'$ est relativement compact. Donc $G$ est aussi relativement compact, et donc compact.
\end{proof}
\begin{proof}[Conclusion de la preuve]
Par le lemme \ref{groupe-tout}, on a $S = G$, ce qui entraîne que pour tout $g \in G$, $P = Pg\modzero$. Donc $P$ est de mesure pleine.
\end{proof}

\begin{rem}
	Le lemme \ref{lemme-elliptique} n'est pas vrai si on ne suppose pas le groupe fermé (Bass \cite{bass}).
\end{rem}

\bibliographystyle{plain}
\bibliography{K3Levi}
\end{document}